\newcommand{\kk}{\ensuremath{K}}
\newcommand{\Cat}{\ensuremath{\mathrm{Cat}}}
\newcommand{\rk}{\ensuremath{\mathrm{rk}}}
\newtheorem{theorem}{Theorem}[section]
\newtheorem*{proposition*}{Proposition}
\theoremstyle{definition}
\newtheorem{definition}[theorem]{Definition}
\newtheorem{example}[theorem]{Example}
\newtheorem{remark}[theorem]{Remark}
\DeclareMathOperator{\Ann}{Ann}
\title{}
\author[]{Nasrin Altafi}
\address{Department of Mathematics and Statistics, Queen’s University, Kingston, Ontario, Canada and Department of Mathematics, KTH Royal Institute of Technology, Sweden}
\email{nasrinar@kth.se}
\thanks{\hspace{-15pt}The author was supported by the grant VR2021-00472. The author would like to thank the organizers of the AMS-EMS-SMF meeting at Grenoble 18-22 July 2022, special session on \say{Deformation of Artinian algebras and Jordan type}. The computer software Macaulay2 \cite{M2} was important for the computations in this paper.}
\theoremstyle{definition}
\newtheorem{problem}{Problem}
\begin{document}
\title{A  note on Jordan types and Jordan degree types}\keywords{Artinian Gorenstein algebra, Hilbert function, catalecticant matrix,  Hessians,  Macaulay dual  generators, Jordan type, partition.}
\subjclass[2010]{Primary: 13E10, 13D40; Secondary: 13H10, 05A17, 05E40.}
\maketitle

\begin{abstract}
We discuss whether the Jordan degree type encodes \break more information about graded artinian Gorenstein algebras than the Jordan type for linear forms. We show that in codimension two, the Jordan type determines the Jordan degree type. We provide examples showing that this is no longer the case in higher codimensions. 
\end{abstract}

\section{Introduction}

For a standard graded artinian algebra $A=\kk[x_1,\dots ,x_n]/I$ and a linear form $\ell\in A_1$ the \textit{Jordan type} $P_{\ell,A}$ is the partition of $\dim_\kk A$ determining the Jordan block decomposition of the multiplication map by $\ell$ on $A$.  Jordan type determines the weak and strong Lefschetz properties of artinian algebras. A graded artinian algebra A is said to satisfy the weak Lefschetz property (WLP) if multiplication map by a linear form on A has maximal rank in every degree. If this holds for all powers of a linear form the algebra A is said to have the strong Lefschetz property (SLP). It is known that an artinian algebra $A$ has the WLP if there is a linear form $\ell$ where the number of parts in $P_{\ell,A}$ is equal to the Sperner number of $A$, the maximum value of the Hilbert function $h_A$. Also A has the SLP if there is a linear form $\ell$ such that $P_{\ell,A} = h_A^\vee$ the conjugate partition of $h_A$, see \cite{Lefbook} and \cite{IMM} for the refined version. Jordan type of a linear form for an artinian algebra captures more information than the weak and Strong Lefschetz properties. For recent papers on Jordan types see \cite{IKVZ, IMM, IMM2} and their references. 

The study of the Lefschetz properties of artinian Gorenstein algebras has recently become one of the most attractive topics in commutative algebra and algebraic geometry. Artinian Gorenstein (AG) algebras are well structured and have many connections to other areas, such as combinatorics and algebraic geometry. More recently, the study of Jordan types of such algebras has become very attractive. Costa and Gondim in \cite{CG} study Jordan types of AG algebras for a general linear form by computing the ranks of associated Hessian and mixed Hessian matrices introduced in \cite{MW} and \cite{GZ}. 

Iarrobino, Macias Marques, and McDaniel in \cite{IMM} study Jordan \break types of graded and local artinian algebras. They introduce the \emph{Jordan degree type} that determines the Jordan type as well as the initial degrees associated to different parts. They show that the Jordan degree type is not equal to the Jordan type by providing two artinian algebras with the Hilbert function $(1,3,5,4,2,1)$ for which they share a Jordan type, although their Jordan degree types are different. 
In this paper, we discuss whether or not there is one Jordan degree types for each Jordan type for AG algebras. In Theorem \ref{theorem}, we show that, for AG algebras of codimension two, the Jordan type is equivalent to the Jordan degree type. We then show that the Jordan degree type is a finer invariant than the Jordan type for AG algebras of codimension at least three. In Example \ref{ex} we provide two AG algebras with the Hilbert function $(1,3,6,9,9,9,6,3,1)$ for which they have equal Jordan type for a linear form but different Jordan degree types. It has been computationally verified that this example has the smallest socle degree among AG algebras of codimension three. We also give such example in codimension four for AG algebras with Hilbert function $(1,4,7,7,4,1)$, see Example \ref{codim4Ex}.  



\section{Jordan types and Jordan degree types of AG algebras}
Let $S = \kk[x_1,\dots ,x_n]$ be a polynomial  ring equipped with standard grading over a field $\kk$ of characteristic zero. Let $A=S/I$ be a graded artinian  ( its Krull dimension is zero) algebra where $I$ is an homogeneous ideal. 
The \emph{Hilbert function} of a graded artinian  algebra $A=S/I$ is  a vector of non-negative integers and we denote it by $h_A=(1,h_1,\dots ,h_d)$ where  $h_A(i)=h_i=\dim_{\kk}(A_i)$. The integer $d$ is called the \emph{socle degree} of $A$, that is the largest integer $i$ such that $h_A(i)>0$.
Let $R=\kk[X_1,\dots , X_n ]$ be the Macaulay dual ring of $S$. Given a homogeneous ideal $I\subset S$ the \emph{inverse system} of $I$ is defined to be a graded $S$-module $M\subset R$ such that $S$ acts on  $R$ by differentiation. For more details of Macaulay's inverse system see \cite{Geramita} and \cite{IK}. From  a  result by Macaulay \cite{F.H.S} it is known that an artinian standard graded $\kk$-algebra $A=S/I$ is Gorenstein if and only if there exists $F\in R_d$ such that $I=\Ann_S(F)$.  
In other words, the inverse system of a graded artinian Gorenstein (AG) algebra is generated by only one form. We recall the definition of the Jordan degree type, \cite[Definition 2.28]{IMM}. 
\begin{definition}[Jordan degree type]
    Let $A$ be a graded artinian algebra and $\ell\in A_1$. Suppose that $P_{\ell,A}=(p_1,\dots ,p_t)$ is the Jordan type for $\ell$ and $A$, then there exist elements $z_1, \dots z_t\in A$, which depend on $\ell$, such that $\{\ell^iz_k\mid 1\leq k\leq t, 0\leq i\leq p_k-1\}$ is  a $\kk$-basis for $A$. The Jordan blocks of the multiplication map by $ \ell$ is determined by the strings $\mathsf{s}_k=\{z_k, \ell z_k, \dots , \ell^{p_k-1}z_k\}$, and $A$ is the direct sum $A=\langle \mathsf{s}_1\rangle \oplus\dots \oplus  \langle \mathsf{s}_t\rangle$. Denote by $d_k$ the degree of $z_k$. Then the \emph{Jordan degree type},  is defined to be the indexed partition $\mathcal{S}_{\ell,A}=({p_1}_{d_1}, \dots ,{p_t}_{d_t})$.
\end{definition}
We recall the definition of rank matrices for AG algebras and briefly describe its connection to Jordan degree types, see \cite{Al} for more details. We convent that the row and column indices of the matrices in this paper start from zero.
\begin{definition}
Let $A=S/\Ann(F)$ be an AG algebra with socle degree $d$. For alinear form $\ell\in A$ define the \textit{rank matrix}, $M_{\ell,A}$, of $A$ and $\ell$ to be  the upper triangular square matrix of size  $d+1$ with the following entries
$$(M_{\ell,A})_{i,j} = \rk\left(\times \ell^{j-i} : A_i\longrightarrow A_j\right),$$
for every $i\leq j$. For $i>j$ we set $ (M_{\ell,A})_{i,j}=0$.

For a given rank matrix $M_{\ell,A}$ we define
the \emph{Jordan degree type matrix}, $J_{\ell,A}$, of $A$ and $\ell$ to be the upper triangular matrix of size $d+1$ with the following non-negative entries 
\begin{align*}\label{J(A,l)definition}
(J_{\ell,A})_{i,j} :
=& (M_{\ell,A})_{i,j}+(M_{\ell,A})_{i-1,j+1}-(M_{\ell,A})_{i-1,j}-(M_{\ell,A})_{i,j+1},
\end{align*}
where we set $(M_{\ell,A})_{i,j}=0$ if either  $i< 0$ or $j< 0$.
\end{definition}
Matrix $J_{\ell,A}$  encodes the same information as the Jordan degree type $\mathcal{S}_{\ell,A}$ which is described in detail in \eqref{JDT}.

For each AG algebra $A=S/\Ann(F)$ with socle degree $d$ and linear form $\ell$ define new AG algebras $A^{(i)}_\ell:=S/\Ann(\ell^i\circ F)$ for all $0\leq i\leq d$. 

For every $0\le i\le d$ we define the $i$-th diagonal of $M_{\ell,A}$ to be the following vector
$$
\mathrm{diag}(i,M_{\ell,A}):=\big((M_{\ell,A})_{0,i},(M_{\ell,A})_{1,i+1},\dots , (M_{\ell,A})_{d-i,d}\big).
$$
By \cite[Proposition 3.4]{Al}, we know that $\mathrm{diag}(i,M_{\ell,A})=h_{A^{(i)}_\ell}$ for every $0\le i\le d$.

Consider the decreasing sequence  $$\mathbf{d}:=(\dim_{\kk}A^{(0)}_\ell, \dim_{\kk}A^{(1)}_\ell, \dots , \dim_{\kk}A^{(d)}_\ell),$$ and recall that the second difference sequence of $\mathbf{d}$ is denoted by $\Delta^2 \mathbf{d}$ and its  $i$-th entry is given by 
\begin{equation}\label{delta}
\Delta^2 \mathbf{d}(i)=\dim_{\kk}A^{(i)}_\ell +\dim_{\kk}A^{(i+2)}_\ell-2\dim_{\kk}A^{(i+1)}_\ell,
\end{equation}
where we set $\dim_{\kk}A^{(i)}_\ell=0$ for $i>d$. Then obtain the Jordan type of $A$ and $\ell$ from \cite[Proposition 3.14]{Al}.
\begin{equation}\label{P}
P_{\ell,A} = \big(\underbrace{d+1,\dots ,d+1}_{n_{d}} ,\underbrace{d,\dots ,d}_{n_{d-1}} ,\dots ,\underbrace{2,\dots ,2}_{n_1},\underbrace{1,\dots ,1}_{n_0}\big),
\end{equation}
such that $\mathbf{n}=(n_0,n_{1}, \dots , n_d)=\Delta^2 \mathbf{d}.$

One can see that $n_i=\sum_{k-j=i}(J_{\ell,A})_{j,k}$ for each $0\le i\le d$. Moreover, to obtain the Jordan degree type, one can read off the degrees of the entries in $P_{\ell,A}$ from the row indices of non-zero elements in $J_{\ell,A}$. More precisely, 
\begin{align}\label{JDT}
    \mathcal{S}_{\ell,A} =& \big(\underbrace{{d+1}_0,\dots ,{d+1}_0}_{n_{d,0}} ,\underbrace{{d}_0,\dots ,d_0}_{n_{d-1,0}}, \underbrace{{d}_1,\dots ,d_1}_{n_{d-1,1}},\\\nonumber &\underbrace{{d-1}_0,\dots ,{d-1}_0}_{n_{d-2,0}},\dots ,\underbrace{{d-1}_2,\dots ,{d-1}_2}_{n_{d-2,2}},\\\nonumber
    &\hspace{4cm}\vdots\\\nonumber
    &\underbrace{2_0,\dots ,2_0}_{n_{1,0}},\dots ,\underbrace{2_d,\dots ,2_d}_{n_{1,d-1}},\underbrace{1_0,\dots ,1_0}_{n_{0,0}},\dots ,\underbrace{1_{d+1},\dots ,1_{d+1}}_{n_{0,d}}\big),
\end{align}
where 
$n_{i,j}=(J_{\ell,A})_{j,i+j}$ and $n_i=\sum_{j=0}^{d-i}n_{i,j}$.
\begin{example}
Let $A=S/\Ann(F)=\kk[x,y,z]/\Ann(F)$ such that $F = X^4+XY^2Z$. Let $\ell=y$  and observe that 
\begin{align*}
    & A^{(0)}_y=A , \quad \text{} \hspace*{2mm} h_{A}=(1,3,5,3,1),\\
    &A^{(1)}_y=S/\Ann(2XYZ), \quad \text{} \hspace*{2mm} h_{A^{(1)}_y}=(1,3,3,1),\\
    &A^{(2)}_y=S/\Ann(2XZ), \quad \text{} \hspace*{2mm} h_{A^{(1)}_y}=(1,2,1), \hspace*{1mm}\text{and}\\
    &h_{A^{(3)}_y}=h_{A^{(4)}_y}=(0).\\
\end{align*}
So we have that 
\begin{align*}
    M_{y,A}=\begin{bmatrix}
        1&1&1&0&0\\
        0&3&3&2&0\\
        0&0&5&3&1\\
        0&0&0&3&1\\
        0&0&0&0&1\\
    \end{bmatrix}, \hspace{1mm}\text{and}\hspace{2mm}J_{y,A}=\begin{bmatrix}
        0&0&1&0&0\\
        0&0&0&2&0\\
        0&0&1&0&1\\
        0&0&0&0&0\\
        0&0&0&0&0\\
    \end{bmatrix}.
\end{align*}
Therefore, using equations \eqref{P} and \eqref{JDT} we  obtain that $$P_{y,A}=(3,3,3,3,1),\hspace{1mm}\text{and}\hspace{2mm}\mathcal{S}_{y,A}=(3_0,3_1,3_1,3_2,1_2).$$
\end{example}
\begin{remark}
   The rank matrices were further studied in \cite{Al} and three necessary conditions were provided for an upper triangular square matrix with non-negative entries that can occur as the rank matrix of some AG algebra and a linear form.
\end{remark}


\section{Artinian Gorenstein algebras of codimension two }
In this section, we show that for AG algebras of codimension two, which are complete intersections, the Jordan type uniquely determines the Jordan degree type. The complete characterization of possible Jordan types of such algebras is provided in \cite{CIJT}. 
\begin{theorem}\label{theorem}
    Let $P=(P_1,\dots , P_t)$ be the Jordan type of some AG algebra of codimension two, socle degree $d$, and a linear form. Then the Jordan degree type associated to $P$ is uniquely determined.
\end{theorem}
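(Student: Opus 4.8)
The plan is to leverage the special structure of codimension-two AG algebras — they are complete intersections — to show that the Jordan type rigidly determines the entire rank matrix $M_{\ell,A}$, and hence the Jordan degree type matrix $J_{\ell,A}$. The key observation is that for codimension two, the Hilbert function $h_A$ is forced to be of a very constrained shape: for a complete intersection $\kk[x,y]/(f,g)$ with $\deg f = a \le \deg g = b$, the Hilbert function rises by $1$ in each degree up to $a-1$, stays constant at $a$ on a plateau, and then descends symmetrically (by Gorenstein duality) by $1$ in each degree. Thus $h_A$ is unimodal with all "steps" of size exactly one and a flat top, which severely restricts how many Jordan strings can begin or end in any given degree.

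**The main steps I would carry out** are as follows. First, I would recall from \eqref{P} that the Jordan type $P$ determines the sequence $\mathbf{n} = \Delta^2\mathbf{d}$, equivalently the dimensions $\dim_\kk A^{(i)}_\ell$; and that by \cite[Proposition 3.4]{Al} the diagonals of $M_{\ell,A}$ are the Hilbert functions $h_{A^{(i)}_\ell}$. Since each $A^{(i)}_\ell$ is itself a codimension-two AG quotient (a complete intersection, or a hyperplane section thereof), each such Hilbert function is again unimodal with unit steps and a symmetric descent determined by its socle degree $d-i$ and its Sperner number. Second, I would argue that for such a constrained Hilbert function, knowing the socle degree and the dimension alone pins down the function uniquely: because the profile $(1,2,\dots)$ is rigid until it reaches the plateau value, the single free parameter (the Sperner number, i.e. the maximum of $h_{A^{(i)}_\ell}$) is recoverable from $\dim_\kk A^{(i)}_\ell = \sum_j h_{A^{(i)}_\ell}(j)$ together with the socle degree $d-i$. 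Third, having reconstructed every diagonal $h_{A^{(i)}_\ell}$ from the data encoded in $P$, I would assemble the full rank matrix $M_{\ell,A}$ and then compute $J_{\ell,A}$ by its defining difference formula, thereby reading off $\mathcal{S}_{\ell,A}$ via \eqref{JDT}.

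**The step I expect to be the main obstacle** is the second one: showing that the sum $\dim_\kk A^{(i)}_\ell$ together with the socle degree uniquely determines the unimodal unit-step Hilbert function, i.e. that no two distinct admissible Hilbert functions of complete-intersection type share both the same socle degree and the same total dimension. The subtlety is that a plateau of length $L$ at height $h$ and a shorter plateau at a larger height can in principle contribute the same area; I will need to use the rigidity of the complete-intersection profile — specifically that the ascending and descending arms are each strictly of slope one until the plateau — to rule this out. Concretely, for a symmetric unimodal sequence of length $d+1$ with unit increments, the Sperner number $s$ and the length are related to the total dimension by an explicit formula (roughly $\dim = s(d+2) - s(s-1)$ for the generic symmetric profile), which is strictly monotone in $s$ for fixed $d$; establishing this monotonicity, and checking the boundary cases where the plateau degenerates, is the technical heart of the argument. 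Once monotonicity is in hand, injectivity of $s \mapsto \dim$ gives the desired reconstruction, and the rest follows by the bookkeeping of the difference formula for $J_{\ell,A}$.
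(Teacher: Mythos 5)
Your proposal follows essentially the same route as the paper's proof: reduce to recovering each diagonal $\mathrm{diag}(i,M_{\ell,A})=h_{A^{(i)}_\ell}$ from $\dim_\kk A^{(i)}_\ell$ (which the Jordan type determines via $\Delta^2\mathbf{d}$) together with the socle degree $d-i$, using the rigidity of codimension-two Gorenstein Hilbert functions (unit-step ascent, plateau, symmetric descent), and then pass to $J_{\ell,A}$ by the difference formula. The "main obstacle" you flag is resolved exactly as in the paper, via the identity $t=\sum_{i=0}^{r}(s+1-2i)$, which is strictly monotone in the Sperner number for fixed socle degree.
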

\begin{proof}
Note that $P$ is of the form in equation \eqref{P}. Using equation \eqref{delta}, we have that $\dim_\kk A^{(d)}_\ell=n_d$, $\dim_\kk A^{(d-1)}_\ell=n_{d-1}-n_d$, and recursively we obtain $\dim_\kk A^{(i)}_\ell$, for every $0\le i\le d-2$ as the following 
$$\dim_\kk A^{(i)}_\ell=n_i+2\dim_\kk A^{(i+1)}_\ell-\dim_\kk A^{(i+2)}_\ell.$$
We claim that for each $0\le i\le d$ the $i$-th the diagonal of the rank matrix $M_{\ell,A}$, $\mathrm{diag}(i,M_{\ell,A})$, that is equal to the Hilbert function of $A^{(i)}_\ell$ is  uniquely determined by $\dim_\kk A^{(i)}_\ell$ and the socle degree of $A^{(i)}_{\ell}$ that is equal to $d-i$. To show the claim, let $B$ be any AG algebra with socle degree $s$ and $\dim_\kk B=t$. We obviously have $t\geq s+1$ and since $h_B$ increases by at most one in each degree we have that $t = \sum_{i=0}^{r}(s+1-2i)$ where $r$ is the largest degree $i$ where $h_B(i)=i+1$. Therefore,
$$
h_B(i)= \begin{cases}
i+1,& 0\le i\le r,\\
r+1, & r+1\le i\le \lfloor\frac{s}{2}\rfloor,
\end{cases}
$$
and by symmetry we obtain the Hilbert function of $B$.
Thus, we conclude that using equation \eqref{JDT} the Jordan degree type matrix $J_{\ell,A}$ is uniquely determined and so is the Jordan degree type $\mathcal{S}_{\ell,A}$.
\end{proof}


\section{Artinian Gorenstein algebras of higher codimensions}
In this section we show that the Jordan degree type is a finer invariant than the Jordan type for AG algebras of codimension at least three.  Before that, in the following example we describe an algorithm that finds an AG algebra of codimension three and a linear form such that the associated rank matrix is equal to a given matrix satisfying the necessary conditions provided in \cite[Corollary 3.9]{Al}.  \\
We first briefly recall the \emph{catalecticant matrix} associated to $F\in R_d$ that is the Macaulay dual generator of $A=S/\Ann(F)$. The entries of the $j$-th catalecticant matrix of $F$ with respect to $\kk$-bases $\mathcal{B}_j=\{\alpha_u^{(j)}\}_u$ and  $\mathcal{B}_{d-j}=\{\beta_v^{(d-j)}\}_v$ of $S_j$ and $S_{d-j}$ respectively is  defined to be 
$$
\Cat_j(F)=\big(\alpha_u^{(j)}\beta_v^{(d-j)} \circ F\big).
$$
For every $j$, the rank of $\Cat_j(F)$ is equal to $h_A(j)$, the Hilbert function of $A$ in degree $j$, see \cite{IK}.

\begin{example}[Constructing $A$ and $\ell\in A_1$ having a given rank matrix]\label{ex1}
    Consider $$M=\begin{bmatrix}
1&1&1&1&0&0\\
0&3&2&2&1&0\\
0&0&4&3&2&1\\
0&0&0&4&2&1\\
0&0&0&0&3&1\\
0&0&0&0&0&1\\
\end{bmatrix}$$ and note that it satisfies the necessary conditions in \cite[Corollary 3.9]{Al}. So we determine whether $M$ occurs as the rank matrix of an AG algebra $A$ and a linear form $\ell$. By linear change of coordinates we may assume that $\ell=x$. Suppose $M=M_{x,A}$ for $A=S/\Ann(F)=\kk[x,y,z]/\Ann(F)$. The form $F$ can be written as
$$
F = X^5G_0+X^4G_1+\cdots +G_5,
$$
such that $G_i$ are forms of degree $i$ in $\kk[Y,Z]$.  We will find $G_i$'s recursively. The last two diagonals of $M$ are equal to $h_{A^{(5)}_x}$ and $h_{A^{(4)}_x}$ and since they are equal to zero vectors we get that $G_0=G_1=0$. The dual generator of $A^{(3)}_x$ is $x^3\circ F = G_2$. We have $h_{A^{(3)}_x}=(1,1,1)$ so $G_2$ is the second power of a linear form in $\kk[Y,Z]$, thus it can be chosen to be equal to $G_2=Y^2$. Therefore, 
$$
F = X^3Y^2+X^2G_3+XG_4+G_5.
$$
Continuing this process we get that $h_{A^{(2)}_x}=(1,2,2,1)$ where the dual generator of $A^{(2)}_x$ is equal to $x^2\circ F=6XY^2+2G_3$. We note that for $G_3=0$ we get the desired Hilbert function for $A^{(2)}_x$. To obtain $G_4$  we compute $x\circ F =3X^2Y^2+G_4$ and to get $h_{A^{(1)}_x}=(1,2,3,2,1)$ we choose $G_4=0$. Thus we have 
$$
F=X^3Y^2+G_5
$$
and it remains to find for a polynomial $G_5$ for which we get $h_A=(1,3,4,4,3,1)$. We do so by computing the catalecticant matrices of $F$, we recall that the Hilbert function of $A$ in degree $1$ and $2$ are equal to ranks of the first and second catalecticant matrices respectively. We denote $G_5=\sum_{i=0}^5a_iY^{5-i}Z^{i}$. For each $j$ let $\mathcal{B}_j$ be the monomial basis of $S_j$ in the lexicographic order. Then first and second catalecticant matrices of $F$ are block matrices. 
\begin{align*}
\mathrm{Cat}_1(F)&=
\left[\begin{array}{@{}c|cc|ccc|cccc|ccccc@{}}
0& 0&0&12&0&0 &0&0&0&0&0&0&0&0&0\\\hline
0& 12&0&0&0&0& 0 &0&0&0&a_0&a_1&a_2&a_3&a_4\\
0& 0&0&0&0&0 &0&0&0&0&a_1&a_2&a_3&a_4&a_5
\end{array}
\right],\\
\end{align*}
and
\begin{align*}
\mathrm{Cat}_2(F)&=
\left[\begin{array}{@{}c|cc|ccc|cccc@{}}
0& 0&0&12&0&0&0&0&0&0 \\\hline
0& 12&0&0&0&0&0&0&0&0\\
0& 0&0&0&0&0&0&0&0&0 \\\hline
12& 0&0&0&0&0&a_0&a_1&a_2&a_3 \\
0& 0&0&0&0&0&a_1&a_2&a_3&a_4\\
0& 0&0&0&0&0&a_2&a_3&a_4&a_5
\end{array}
\right].\\
\end{align*}
One can easily see that for $G_5=(Y+Z)^5$  we have $\rk \mathrm{Cat}_1(F)=3$ and $\rk \mathrm{Cat}_2(F)=4$. We conclude that $M$ is the rank matrix of the AG algebra $A=S/\Ann(F)$ and $\ell=x$ where $$F = X^3Y^2+(Y+Z)^5.$$
Furthermore, using equations \eqref{P} and \eqref{JDT} the Jordan type and Jordan degree type of $A$ and $\ell=x$ are equal to 
$P_{x,A}=(4,4,4,1,1,1,1)$ and $\mathcal{S}_{x,A}=(4_0,4_1,4_2,1_1,1_2,1_3,1_4)$.
\end{example}

Using the above method we show that Jordan degree type encodes more information than the Jordan type for AG algebras of codimension at least three.


\begin{example}[Two Jordan degree types for a Jordan type]\label{ex}
The following matrices satisfy the necessary conditions given in \cite[Corollary 3.9]{Al}. So they can occur as rank matrices for AG algebras and linear forms. 
\begin{Small}\begin{equation*}
M_1 = \begin{bmatrix}
1&\textcolor{red!70}{1}&1&1&1&1&1&0&0\\
0&3&\textcolor{red!70}{3}&3&3&2&2&1&0\\
0&0&6&\textcolor{red!70}{6}&5&4&3&2&1\\
0&0&0&9&\textcolor{red!70}{6}&5&4&2&1\\
0&0&0&0&9&\textcolor{red!70}{6}&5&3&1\\
0&0&0&0&0&9&\textcolor{red!70}{6}&3&1\\
0&0&0&0&0&0&6&\textcolor{red!70}{3}&1\\
0&0&0&0&0&0&0&3&\textcolor{red!70}{1}\\
0&0&0&0&0&0&0&0&1\\
\end{bmatrix}, M_2 = \begin{bmatrix}
1&\textcolor{red!70}{1}&1&1&1&1&1&0&0\\
0&3&\textcolor{red!70}{3}&3&3&2&2&1&0\\
0&0&6&\textcolor{red!70}{5}&5&4&3&2&1\\
0&0&0&9&\textcolor{red!70}{7}&5&4&2&1\\
0&0&0&0&9&\textcolor{red!70}{7}&5&3&1\\
0&0&0&0&0&9&\textcolor{red!70}{5}&3&1\\
0&0&0&0&0&0&6&\textcolor{red!70}{3}&1\\
0&0&0&0&0&0&0&3&\textcolor{red!70}{1}\\
0&0&0&0&0&0&0&0&1\\
\end{bmatrix}.
\end{equation*}\end{Small}
Note that the two matrices only differ in the red entries.  Similar to the process described in Example \ref{ex1} but with more technicality, we find that both $M_1$ and $M_2$ occur as rank matrices. More precisely, 
$M_1=M_{x,A}$ and $M_2=M_{x,B}$ such that $A=S/\Ann(F)$, $B=S/\Ann(G)$, 
$$F=X^6Y^2+X^3(Y+Z)^5+XY(Y+Z)^6+Y^8+Z^8,$$ and 
$$G=X^6Y^2+X^3(Y^5+Z^5)+X(Y^7+Z^7)+Y^8+Y^7Z+YZ^7+Z^8.$$
Because the sum of red entries of the two matrices are equal using equation \eqref{P} we get that $$P_{x,A}=P_{x,B}=(7^3,4^4,2^2,1^6).$$ On the other hand, by equation \eqref{JDT} we have 
\begin{align*}
\mathcal{S}_{x,A}&=(7_0,7_1,7_2,4_1,4_2,4_3,4_4,2_2,2_5,1_3,1_3,1_4,1_4,1_5,1_5),\\
\mathcal{S}_{x,B}&=(7_0,7_1,7_2,4_1,4_2,4_3,4_4,2_3,2_4,1_2,1_3,1_3,1_5,1_5,1_6).
\end{align*}

\end{example}
\begin{remark}
Using the algebra software Macaulay2, it has been computationally proven that among all AG algebras of codimension three with socle degree smaller than $8$ there are no examples of AG algebras with the same Jordan type and different Jordan degree types.  In other words, the above example has the smallest socle degree among all AG algebras of codimension three. On the other hand, for AG algebras of higher codimensions there exist such examples of smaller socle degrees, Example \ref{codim4Ex}. 
\end{remark}
Using the same method as above we obtain the following example which we leave the computations to the reader. One may use the Macaulay2 codes provided bellow for further experiments. 
\begin{example}\label{codim4Ex}
    Let $S = \kk[x,y,z,w]$ and $A=S/\Ann(F)$ and $B=S/\Ann(G)$ be AG algebras of codimension four where 
$$F = X^4Y+X^2Y^2Z+XY^3W +Y^3W^2,$$ and $$G = X^4Y+X^2Y^2Z+XZ^3Y+W^5. 
$$
We have that $h_A=h_B=(1,4,7,7,4,1)$. For linear form $\ell=x$ the rank matrices of $A$ and $B$ are equal to
\begin{equation*}
M_{\ell,A} = \begin{bmatrix}
1&\textcolor{red!70}{1}&1&1&1&0\\
0&4&\textcolor{red!70}{4}&3&2&1\\
0&0&7&\textcolor{red!70}{4}&3&1\\
0&0&0&7&\textcolor{red!70}{4}&1\\
0&0&0&0&4&\textcolor{red!70}{1}\\
0&0&0&0&0&1\\
\end{bmatrix}, \hspace{2mm}\text{and}\hspace{2mm} M_{\ell,B} = \begin{bmatrix}
1&\textcolor{red!70}{1}&1&1&1&0\\
0&4&\textcolor{red!70}{3}&3&2&1\\
0&0&7&\textcolor{red!70}{6}&3&1\\
0&0&0&7&\textcolor{red!70}{3}&1\\
0&0&0&0&4&\textcolor{red!70}{1}\\
0&0&0&0&0&1\\
\end{bmatrix}.
\end{equation*}
Using equation \eqref{P} and \eqref{JDT} we get that 
$$
P_{\ell,A}=P_{\ell,B}=(5,5,3,3,2,2,1,1,1,1),
$$
and
\begin{align*}
    \mathcal{S}_{\ell,A}&=(5_0,5_1,3_1,3_2,2_1,2_3,1_2,1_2,1_3,1_3),\\
    \mathcal{S}_{\ell,B}&=(5_0,5_1,3_1,3_2,2_2,2_2,1_1,1_2,1_3,1_4).\\
\end{align*}

\end{example}
\noindent \textbf{Macaulay 2 codes:} We provide Macaulay2 codes that were primarily used for computations in this study. Let $F$ be a form of degree $d$ in the Macaulay dual ring, $R=\kk[x,y,z]$ (here, we abuse the notation and denote the variables in the dual ring by $x,y,z$).  The following are some codes that compute the catalecticant matrix of $F$; using this, we compute the Hilbert function of the AG algebra $A$ associated with $F$ without computing $A$. We then for a given linear form $\ell$ we compute the dual generators of the AG algebras $A^{(i)}_\ell$ and then we compute their Hilbert functions which form the diagonals of the rank matrix $M_{\ell,A}$, $\mathrm{diag}(i,M_{\ell,A})$.  Note that the following computations work for any number of variables, one only needs to adjust the number of variables in $R$ and $\ell$ and everything else remains the same. 

\begin{verbatim}
R = QQ[x,y,z]
Cat = (i,F) -> (diff(transpose basis(i,R), 
                diff(basis(first degree F-i,R),F)))
Hilb = (F) -> for i to first degree F list rank Cat(i,F)
l = a*x+b*y+c*z;
f = (i,l,F) -> diff(l^i,F)
diag = (i,l,F) -> Hilb(diff(l^i,F))
\end{verbatim}
Finally, given $F$ together with its degree, the following two functions compute the rank matrix and Jordan type of AG algebra associated with $F$ and $\ell$: 
\begin{verbatim}
M = (l,F) -> ( 
    d = first degree F;  
    L = for i to d list for j to d-i list 
        rank Cat(i,diff(l^j,F));
    matrix({L_0}|(for i from 1 to d list 
          (for j from 1 to i list 0)|L_i))
    )
    
P = (l,F) -> ( 
    d = first degree F; 
    L1 = for i to d list sum for j to d-i list 
         rank Cat(j,diff(l^i,F));
    L2 = L1|{0,0};
    L3 = for i to d list L2_i+L2_(i+2)-2*L2_(i+1);
         reverse flatten for i to d list if L3_i!= 0 then 
         for j to L3_i-1 list i+1 else continue
    )

\end{verbatim}
We end by posing some problems: 
\begin{problem}
Under what conditions for an AG algebra of codimension higher than two, is the Jordan degree type equivalent to the Jordan type?
\end{problem}
\begin{problem}
For a given Jordan type, can we determine the possible Jordan degree types?
\end{problem}
\renewcommand*{\bibfont}{\normalfont\footnotesize}

\printbibliography
\end{document}